\newtheorem{thm}{Theorem}[section]
\newtheorem{prop}[thm]{Proposition}
\newtheorem{rmk}[thm]{Remark}
\numberwithin{equation}{section}
\newcommand{\ord}{\text{ord}}
\newcommand{\N}{{\mathbb{N}}}
\newcommand{\z}{{\mathbb{Z}}}
\newcommand{\eq}{\equiv}
\newcommand{\rom}[1]{\uppercase\expandafter{\romannumeral #1\relax}}
\title[Non-almost regular quaternary $m$-gonal forms]{Non-almost regular quaternary $m$-gonal forms}
\author{Dayoon Park}
\address{Department of Mathematical Sciences, Ulsan National Institute of Science and
Technology, Ulsan, Korea}
\email{pdy1016@unist.ac.kr}
\thanks {This work was supported by the National Research Foundation of Korea (NRF) grant funded by the Korea government(MSIT) (No. 2020R1A4A1016649). \\
This work was supported by the UBSI Fellowship Program (project No.1.210131.01) of UNIST}
\begin{document}

\maketitle

\begin{abstract}
The maximal rank $n$ which admits a non-almost regular $m$-gonal form is $4$.
In this article, for any given $(a_1,a_2,a_3,a_4) \in \N^4$ and sufficiently large $m$, we completely determine whether the $m$-gonal form $\left<a_1,a_2,a_3,a_4\right>_m$ is almost regular or not.
And we show that for any $m \ge 3$, there are infinitely many non-almost regular $m$-gonal forms.
\end{abstract}


\section{Introduction}

The {\it $m$-gonal number} which is defined as a total number of dots to constitute a regular $m$-gon is one of the classical objects in the number theory.
More precisely, we call the total number of dots of regular $m$-gon with $x$ dots for each side
\begin{equation} \label{m number}
    P_m(x):=\frac{m-2}{2}(x^2-x)+x
\end{equation}
as {\it $x$-th $m$-gonal number}.
One of the most famous stories about $m$-gonal number is a Fermat's Conjecture which states that every positive integer may be written as a sum of at most $m$ $m$-gonal numbers.
This conjecture was resolved by Lagrange, Gauss, and Cauchy for $m=4$, $m=3$, and $m\ge 3$ , respectively.
As a generalization of Gauss's work, Liouville classified the $3$-tuples $(a_1,a_2,a_3) \in \N^3$ for which
every positive integer can be written as the weighted sum of triangular numbers $a_1P_3(x_1)+a_2P_3(x_2)+a_3P_3(x_3)$.
And as a generalization of Lagrange's work, Ramanujan classified the $4$-tuples $(a_1,a_2,a_3,a_4) \in \N^4$ for which
every positive integer can be written as the weighted sum of square numbers $a_1P_4(x_1)+a_2P_4(x_2)+a_3P_4(x_3)+a_4P_4(x_4)$, but one mistake was found later in the Ramanujan's list.

We call a weighted sum of $m$-gonal numbers
\begin{equation} \label{m form}
    F_m(\mathbf x)=a_1P_m(x_1)+\cdots+a_nP_m(x_n)
\end{equation}
where $(a_1,\cdots,a_n) \in \N^n$ as {\it $m$-gonal form}.
We simply write the $m$-gonal form $a_1P_m(x_1)+\cdots+a_nP_m(x_n)$ as $\left<a_1,\cdots,a_n\right>_m$.
But exceptionally we adopt a notation $\left<a_1,\cdots,a_n\right>$ for the square (or $4$-gonal) form (i.e., diagonal quadratic form) instead of $\left<a_1,\cdots,a_n\right>_4$ true to the tradition of the theory of quadratic form.

By directly following the definition of original $m$-gonal number, we may catch that only non-negative rational integer $x$ would be admitted in \eqref{m number}.
But recently, as a generalized $m$-gonal number version, we often admit negative rational integer $x$ too.
For $N \in \N$, if the diophantine equation 
\begin{equation} \label{rep}
F_m(\mathbf x)=N    
\end{equation}
has an integer solution $\mathbf x \in \z^n$ (in sense of the generalized $m$-gonal number) (resp. a non-negative integer solution $\mathbf x \in \N_0^n$ (in sense of the original $m$-gonal number)), then we say that  the $m$-gonal form $F_m(\mathbf x)$ {\it (globally) represents} $N$ over $\z$ (resp. over $\N_0$).
Undoubtedly, a representation of an integer $N$ over $\N_0$ implies a representation of $N$ over $\z$ by an $m$-gonal form.
As is well known, the problem of completely classifying every positive integer $N$ which is represented by arbitrary given $m$-gonal form $F_m(\mathbf x)$ is not easy in general.
In an effort to approach the problem,
we suggest the reduced congruence equation 
\begin{equation} \label{loc rep1}
    F_m(\mathbf x)\eq N \pmod{r}.
\end{equation}
When the congruence equation \eqref{loc rep1} has an integer solution $\mathbf x \in \z^n$ (or equivalently, has a non-negative integer solution $\mathbf x \in \N_0^n$) for every $r \in \z$, we say that $F_m(\mathbf x)$ {\it locally represents} $N$.
The solvability of congruence equation 
$$F_m(\mathbf x)\eq N \pmod {p^{\alpha}}$$
only for every prime $p$ and natural number ${\alpha} \in \N$
would be enough to derive the local representability of $N$ by $F_m(\mathbf x)$ in virtue of the Chinese Remainder Theorem.
Note that for a prime $p$,
the congruence equation $F_m(\mathbf x)\eq N \pmod {p^{\alpha}}$
has an integer solution $\mathbf x \in \z^n$ for every natural number ${\alpha} \in \N$
if and only if the equation $F_m(\mathbf x)=N$ has a $p$-adic integer solution $\mathbf x \in \z_p^n$.
When the equation $F_m(\mathbf x)=N$ has a $p$-adic integer solution $\mathbf x \in \z_p^n$, we say that {\it $F_m(\mathbf x)$ represents $N$ over $\z_p$}.
Obviously, the (global) representability implies the local representability, but the converse does not hold in general.
Especially, when the converse also holds, i.e., an $m$-gonal form $F_m(\mathbf x)$ represents every positive integer $N$ which is locally represented by $F_m(\mathbf x)$ over $\z$ (resp. over $\N_0$), we say that $F_m(\mathbf x)$ is {\it regular over $\z$ (resp. over $\N_0$)}.
When $F_m(\mathbf x)$ represents every positive integer $N$ which is locally represented by $F_m(\mathbf x)$ but finitely many over $\z$ (resp. over $\N_0$), we say that $F_m(\mathbf x)$ is {\it almost regular over $\z$ (resp. over $\N_0$)}.

The concept of {\it regular form} was firstly introduced by Dickson in \cite{D}.
In \cite{W}, Watson proved that there are only finitely many primitive positive definite regular ternary quadratic forms by introducing very nice {\it Watson's $\Lambda$- transformation} which has been widely applied in the study of regular form.
In \cite{JKS}, Jagy, Kaplansky, and Schiemann who used the Watson's $\Lambda$-transformation classified 913 candidates for primitive positive definite regular ternary quadratic forms.
And they claimed the regularity of all of them but 22.
Oh \cite{Oh} showed the regularity of $8$ forms of unconfirmed $22$ forms.
And Lemke Oliver \cite{LO} proved the regularity of the remaining $14$ candidates under the assumption of generalized Riemann
Hypothesis.

The finiteness of primitive regular ternary triangular forms (i.e., $3$-gonal forms) was shown by Chan and Oh \cite{CO} and all the finitely many primitive regular ternary triangular forms was determined by Kim and Oh \cite{KO}.
As an improvement of the result in \cite{CO}, Chan and Ricci \cite{CR} proved the finiteness of primitive regular ternary quadratic polynomials with a fixed conductor in \cite{CR}.
As another version's improvement of the work, in \cite{CO}, He and Kane showed the finiteness of primitive regular ternary polygonal forms (i.e., $m$-gonal forms for arbitrary $m \ge 3$).
Which implies that there are only finitely many $m$ which admits a regular $m$-gonal form of rank $3$.
In \cite{reg}, Kim and the author showed that
for each rank $n \ge 4$ too, there are only finitely many $m$ which admits a regular $m$-gonal form of rank $n$.
Moreover in \cite{reg}, they completely determined the type of regular $m$-gonal forms of rank $n \ge 4$ for all $m \ge 14$ with $m \not\eq 0 \pmod 4$ and $m \ge 28$ with $m \eq 0 \pmod 4$
by introducing {\it $\Lambda_p$-transformation} which is well designed to track the regular $m$-gonal forms.

\vskip 0.8em

Now let us turn to our main subject which is almost regular form in this article.
By Theorem 4.9 (1) in \cite{CO}, every $m$-gonal form of rank $n \ge 5$ is almost regular over $\z$, i.e., 
every $m$-gonal form of rank $n \ge 5$ represents all the sufficiently large integers (in other word, all but finitely many integers) which are locally represented by the form over $\z$.
But such the nice local-to-global principle is not guaranteed when the rank is less than 5 in general.
There is non-almost regular $m$-gonal form of rank $4$, and by the above argument, $4$ should be the maximum rank which admits a non-almost regular $m$-gonal form over $\z$.
In this short article, we mainly look into the non-almost regular $m$-gonal forms of rank $4$.
The following two theorems are the main goal in this article.
\begin{thm} \label{thm 1}
Let $a_1,a_2,a_3,a_4 \in \N$ be given.
\begin{itemize}
    \item [(1) ] If the quaternary quadratic form $\left<a_1,a_2,a_3,a_4\right>\otimes \z_p$ is isotropic for every prime $p$, then 
    the quaternary $m$-gonal form $\left<a_1,a_2,a_3,a_4\right>_m$ is almost regular over $\z$ for any $m \ge 3$.
    \item[(2) ] Otherwise, define $T:=\{p| \text{$\left<a_1,a_2,a_3,a_4\right>\otimes \z_p$ is anisotropic}\}$ a non-empty set of primes.\\
For any $m \ge 3$ satisfying
\begin{equation} \label{cond}
\begin{cases}
p|m-2 & \text{for all $p \in T \setminus\{2\}$}\\
m \not\eq 0 \pmod{4} & \text{if } 2 \in T,
\end{cases}
\end{equation}
the $m$-gonal form $\left<a_1,a_2,a_3,a_4\right>_m$ is almost regular over $\z$.\\
For every sufficiently large $m$ which is not satisfying \eqref{cond}, the $m$-gonal form $\left<a_1,a_2,a_3,a_4\right>_m$ is not almost regular over $\z$ (so also over $\N_0$).
\end{itemize}
\end{thm}

\vskip 0.8em

\begin{thm}\label{thm 2}
For any $m \ge 3$, there are infinitely many primitive quaternary $m$-gonal forms which are not almost regular over $\z$ (so also over $\N_0$).
\end{thm}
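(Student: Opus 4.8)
The plan is to pass from the $m$-gonal form to a quadratic form on a shifted lattice and then, for each fixed $m$, to isolate a prime at which an anisotropic quaternary form forces an infinite family of locally represented but globally unrepresented integers. Completing the square in \eqref{m number} gives $8(m-2)P_m(x)=(2(m-2)x-(m-4))^2-(m-4)^2$, so $\left<a_1,a_2,a_3,a_4\right>_m$ represents $N$ if and only if $Q(\mathbf X)=a_1X_1^2+\cdots+a_4X_4^2$ represents $\tilde N:=8(m-2)N+(m-4)^2(a_1+\cdots+a_4)$ with $X_i\eq -(m-4)\pmod{2(m-2)}$ for all $i$; equivalently, $\tilde N$ is represented by $Q$ on the coset $\mathbf v+2(m-2)\z^4$ with $\mathbf v=(-(m-4),\dots,-(m-4))$. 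Theorem \ref{thm 1}(1) shows that almost regularity can fail only when $Q$ is anisotropic at some prime; guided by this, I will arrange $Q$ to be anisotropic at a prime where the coset is also genuinely shifted, now with $m$ held fixed and the tuple varying.

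First I would fix, for each $m\ge 7$, a prime $p=p(m)$ dividing $2(m-2)$ with $\ord_p(m-4)<\ord_p(2(m-2))$, i.e. a prime at which the shift is nontrivial. If $m-2$ has an odd prime factor $p$, then $\ord_p(m-4)=\ord_p((m-2)-2)=0<\ord_p(m-2)$, so this $p$ works. Otherwise $m-2$ is a power of $2$, which for $m\ge 7$ forces $m\eq 2\pmod 4$, and then $p=2$ works since $\ord_2(m-4)=1<1+\ord_2(m-2)=\ord_2(2(m-2))$. The cases $m\le 6$ and, for $m=4$, the trivial shift $m-4=0$ are exactly where such a $p$ is unavailable, which is the source of the bound $m\ge 7$. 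At the chosen $p$ the coset condition forces $\ord_p(X_i)=\ord_p(m-4)$ to a single bounded value for every admissible $\mathbf X$.

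Next I would construct an infinite family of primitive tuples at which $Q$ is anisotropic over $\q_p$. Starting from a primitive positive definite form $\q_p$-isometric to the unique anisotropic quaternary $\q_p$-space, realised as $\left<1,u,p,up\right>$ with $u$ a suitable unit, I would generate infinitely many pairwise distinct primitive tuples either by replacing one entry $a_4$ with $a_4p^{2k}$ for $k=1,2,\dots$ — which preserves the $\q_\ell$-isometry class of $Q$ at every prime $\ell$, and hence anisotropy at $p$, since $p^{2k}$ is a square in each $\q_\ell$ — or by letting one entry vary through an arithmetic progression pinning down all local invariants; keeping the entry $a_1=1$ retains $\gcd(a_1,a_2,a_3,a_4)=1$. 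For each tuple I would then exhibit an infinite set of integers $N$ that are locally but not globally represented: because $Q$ is anisotropic at $p$ while the shift is nontrivial there, the genus of the shifted lattice contains more than one spinor genus with a spinor exceptional square class $t(\q^*)^2$, and an integer $tk^2$ on this class is represented by our form exactly when a spinor character, expressed as a product of Hilbert symbols in $k$, equals $+1$; choosing $k$ through suitable residue classes makes it equal $-1$ infinitely often, producing infinitely many everywhere locally represented integers that the form misses.

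The main obstacle is precisely this last step: proving, for the fixed $m$ and uniformly across the family, that these spinor exceptional integers are genuinely infinite and genuinely unrepresented. Concretely one must verify that the relative spinor norm character attached to the coset at $p$ is nontrivial — this is exactly where the nontrivial shift $\ord_p(m-4)<\ord_p(2(m-2))$ and the anisotropy of $Q$ at $p$ enter decisively — and that the remaining local conditions at the primes dividing $2(m-2)$ and at the real place can be met simultaneously so that the candidate integers are everywhere locally represented. This is the point at which the argument behind Theorem \ref{thm 1}(2) must be run with the quantifiers reversed, $m$ fixed and the tuple varied, and it is the technical heart of the proof; once it is in place, the infinitude and primitivity of the constructed family yield Theorem \ref{thm 2}.
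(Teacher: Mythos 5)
There is a genuine gap, and it lies exactly where you locate ``the technical heart'': both the prime you choose and the mechanism you invoke are the wrong ones, and the step you defer is the one that cannot be completed as stated. You pick a prime $p\mid 2(m-2)$ at which the shift is nontrivial, so that, as you yourself note, the coset forces $\ord_p(X_i)=\ord_p(m-4)$ to a single bounded value. But that is precisely why anisotropy of $Q$ at such a $p$ produces \emph{no} obstruction: by Remark \ref{aniso}, an anisotropic $Q$ then takes values of bounded $p$-order on the coset, and correspondingly the targets $\tilde N=8(m-2)N+(m-4)^2(a_1+\cdots+a_4)$ lie in a fixed residue class modulo $p^{\ord_p(8(m-2))}$ and (for odd $p\mid m-2$, using primitivity and the shape of anisotropic forms in Remark \ref{aniso}(1)) are forced to be $p$-adic units. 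This is consistent with Proposition \ref{loc rep}(1),(2), which makes the $m$-gonal form \emph{universal} over $\z_p$ at every prime $p$ with $p \mid m-2$ (resp.\ at $2$ when $m\nequiv 0 \pmod 4$): there is nothing to obstruct there. The failure of almost regularity for a quaternary form can only come from integers with \emph{unbounded} divisibility at an anisotropic prime (this is the content of the Tartakowsky-type result behind Theorem 4.9 of \cite{CO}, i.e.\ Theorem \ref{thm 1}(1)), and to manufacture such integers among the $\tilde N$ one needs the anisotropic prime $q$ to satisfy $q\nmid 2(m-2)$, so that $N$ can be chosen making $\ord_q(\tilde N)$ arbitrarily large and so that $q^k\eq 1 \pmod{8(m-2)}$ permits the descent. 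The paper therefore works at an odd prime $q\mid m-4$ (automatically coprime to $m-2$), or at $q=2$ when $4\mid m$ using the dyadic transfer of Proposition \ref{loc rep}(4), where the modulus $\tfrac{m-2}{2}$ is odd. This is also the true source of the bound $m\ge 7$ (namely $m-4\ge 3$ either has an odd prime factor or is a power of $2$ that is at least $4$); your explanation of the bound is incorrect, since your $p$ already exists for $m=5$.

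Your proposed engine for the unrepresented integers --- spinor genera and a spinor exceptional square class of the shifted lattice --- is also not available here: spinor exceptions are a rank-$3$ phenomenon, and for quaternary lattices (and their cosets) every sufficiently large integer represented by the genus with bounded divisibility at the anisotropic primes is represented by the form itself. The construction that actually works is elementary: choose $Q$ anisotropic at a prime $q\mid m-4$ with $q\nmid 2(m-2)$, make all $a_i>q$ so that some $N_0\in[1,q]$ with $q\mid \tilde N_0$ is missed for trivial size reasons, and then scale $\tilde N_0$ by $q^{2kn}$ with $q^k\eq 1\pmod{8(m-2)}$; anisotropy at $q$ forces any representation of $\tilde N_n$ to descend, after dividing each $X_i$ by $q^{kn}$, to a representation of $N_0$ on the same coset, a contradiction. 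Without replacing your paragraph three by an argument of this kind --- and relocating the anisotropic prime away from $2(m-2)$ --- the proposal does not prove the theorem.
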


\vskip 0.8em

\begin{rmk}
By using Theorem \ref{thm 1}, for arbitrary given $4$-tuple $(a_1,a_2,a_3,a_4) \in \N^4$ and any sufficiently large $m$, we may directly determine whether the $m$-gonal form $\left<a_1,a_2,a_3,a_4\right>_m$ is almost regular or not.
\end{rmk}

\vskip 0.8em

Throughout this article, we adopt the arithmetic theory of quadratic forms.
Before we move on, we briefly set our languages and terminologies which are used throughout this article.

If a quadratic form (or $m$-gonal form) represents every $p$-adic integer in $N\z_p$ over $\z_p$, then we say that the quadratic form (or $m$-gonal form) is {\it $N\z_p$-universal} and especially, when $N\z_p=\z_p$, we say that {\it universal over $\z_p$}.
If a quadratic form (or $m$-gonal form) is universal over $\z_p$ for every prime $p$, then we say that the quadratic form (or $m$-gonal form) is {\it locally universal}.

We say that a quadratic form $Q(\mathbf x)$ is {\it isotropic} if the equation $Q(\mathbf x)=0$ has a non-trivial solution $\mathbf x \not\eq \mathbf 0$ and $\it anisotropic$ otherwise.
It is well known that any quadratic form $Q(\mathbf x)$ of rank $n\ge 5$ is isotropic over local ring $\z_p$.
This property is critical in the regularity of arbitrary quadratic form of rank $n \ge 5$.

Any unexplained notation and terminology can be found in \cite{O}.


\section{Preliminaries}
First of all, we may get an easy but important observation that
$$N=a_1P_m(x_1)+\cdots+a_nP_m(x_n)$$
if and only if
$$8(m-2)N+(m-4)^2(a_1+\cdots+a_n)=\sum_{i=1}^na_i(2(m-2)x_i-(m-4))^2.$$
Which allows transfer the issue of the representation by $m$-gonal form to the issue of the representation by diagonal quadratic form with a congruence condition.
By using the following proposition, one may effectively determine the representability of integer by an $m$-gonal form over non-archimedean local ring with help of known results in the quadratic form theory over non-archimedean local ring.

\begin{prop} \label{loc rep}
Let $F_m(\mathbf x)=a_1P_m(x_1)+\cdots+a_nP_m(x_n)$ be a primitive (i.e., satisfying $(a_1,\cdots,a_n)=1$) $m$-gonal form.
\begin{itemize}
    \item [(1) ] When $p$ is an odd prime with $p|m-2$, $F_m(\mathbf x)$ is universal over $\z_p$.
    \item [(2) ] When $m \not\eq 0 \pmod 4$, $F_m(\mathbf x)$ is universal over $\z_2$.
   \item [(3) ] When $p$ is an odd prime with $(p,m-2)=1$, an integer $N$ is represented by $F_m(\mathbf x)$ over $\z_p$ if and only if the integer $8(m-2)N+(a_1+\cdots+a_n)(m-4)^2$ is represented by the diagonal quadratic form $\left<a_1,\cdots,a_n \right>$ over $\z_p$.
    \item [(4) ] When $m \eq 0 \pmod 4$, an integer $N$ is represented by $F_m(\mathbf x)$ over $\z_2$ if and only if the integer $\frac{m-2}{2}N+(a_1+\cdots+a_n)\left(\frac{m-4}{4}\right)^2$ is represented by the diagonal quadratic form $\left<a_1,\cdots,a_n \right>$ over $\z_2$.
\end{itemize}
\end{prop}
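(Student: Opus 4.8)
The unifying device is the completing-the-square identity recorded just above the proposition. Writing $y_i = 2(m-2)x_i - (m-4)$, one has $F_m(\mathbf x) = N$ if and only if $\sum_i a_i y_i^2 = M$, where $M := 8(m-2)N + (m-4)^2(a_1 + \cdots + a_n)$. Over $\mathbb{Z}_p$, as $x_i$ runs through $\mathbb{Z}_p$ the quantity $y_i$ runs through the coset $C_p := -(m-4) + 2(m-2)\mathbb{Z}_p$. Hence $F_m$ represents $N$ over $\mathbb{Z}_p$ precisely when $\langle a_1,\ldots,a_n\rangle$ represents $M$ by a vector all of whose entries lie in $C_p$, and the plan is to analyse this coset constraint in each of the four cases.

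For (3), where $p$ is odd and $p \nmid m-2$, the scalar $2(m-2)$ is a $p$-adic unit, so $x_i \mapsto y_i$ is an affine bijection of $\mathbb{Z}_p$ and $C_p = \mathbb{Z}_p$. The coset constraint is vacuous, and the stated equivalence with representing $M$ by $\langle a_1,\ldots,a_n\rangle$ over $\mathbb{Z}_p$ follows at once. For (4), where $p = 2$ and $4 \mid m$, I would compute $\operatorname{ord}_2(m-2) = 1$, so $2(m-2)\mathbb{Z}_2 = 4\mathbb{Z}_2$, while $4 \mid m-4$; thus $C_2 = 4\mathbb{Z}_2$. Substituting $y_i = 4z_i$ with $z_i$ ranging freely over $\mathbb{Z}_2$ turns $\sum a_i y_i^2 = M$ into $16\sum a_i z_i^2 = M$. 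Since $M = 16\big(\tfrac{m-2}{2}N + (\tfrac{m-4}{4})^2\sum a_i\big)$, dividing by $16$ yields exactly the reduction claimed in (4).

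For the universality statements (1) and (2) I would argue directly with Hensel's lemma rather than through the coset, exploiting primitivity: some $a_j$ is a $p$-adic unit. Setting the other variables to $0$, it suffices to solve $a_j P_m(x_j) = N$ over $\mathbb{Z}_p$ for every $N$. In case (1) ($p$ odd, $p \mid m-2$) the coefficients $\frac{m-2}{2}$ and $\frac{4-m}{2}$ lie in $\mathbb{Z}_p$, and modulo $p$ one has $P_m(x) \equiv x$ and $P_m'(x) \equiv 1$; thus $g(x) := a_j P_m(x) - N$ satisfies $g(x) \equiv a_j x - N$ with $g'$ a unit, and choosing $x_0 \equiv a_j^{-1}N \pmod p$ lets Hensel lift to an exact root. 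Case (2) with $m \equiv 2 \pmod 4$ is identical in spirit, since there $\frac{m-2}{2} \in 2\mathbb{Z}_2$ and $\frac{4-m}{2}$ is a $2$-adic unit, so again $g'$ is a unit modulo $2$ and a starting point $x_0 \equiv N \pmod 2$ works.

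The one genuinely delicate point is case (2) with $m$ odd: here $\frac{m-2}{2} \notin \mathbb{Z}_2$, so $P_m$ is not a polynomial over $\mathbb{Z}_2$ and the naive Hensel setup fails. I would instead clear the denominator and work with $h(x) := 2a_j P_m(x) - 2N = a_j\big((m-2)x^2 - (m-4)x\big) - 2N \in \mathbb{Z}_2[x]$, whose roots coincide with those of $g$. Since $m-4$ is odd, $h'(0) = -a_j(m-4)$ is a unit while $h(0) = -2N \in 2\mathbb{Z}_2$, so $|h(0)|_2 \le 1/2 < 1 = |h'(0)|_2^2$ and the strong form of Hensel's lemma produces a root $x^* \in \mathbb{Z}_2$ with $a_j P_m(x^*) = N$. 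Handling this $2$-adic non-integrality, together with the careful valuation bookkeeping that pins down $C_2$ in (4), is where essentially all the work lies; the odd-$p$ parts (1) and (3) are then routine.
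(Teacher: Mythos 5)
Your proposal is correct, but note that the paper does not actually prove Proposition \ref{loc rep}: it simply cites Proposition 3.1 of the author's companion preprint \cite{rank 5}. So the comparison is between your self-contained argument and an external reference. Your argument is the natural one and all the delicate points check out: in (3) the map $x_i\mapsto 2(m-2)x_i-(m-4)$ is an affine bijection of $\z_p$ because $2(m-2)\in\z_p^{\times}$, so the coset constraint disappears; in (4) one has $\ord_2(2(m-2))=2$ and $4\mid m-4$, so the coset is exactly $4\z_2$ and the factor of $16$ cancels to give the stated reduction; and in (1) and (2) primitivity supplies a unit coefficient $a_j$ so that a single variable already suffices, with $P_m'(x)\eq 1 \pmod p$ when $p\mid m-2$ (and similarly over $\z_2$ when $m\eq 2 \pmod 4$) making Hensel's lemma apply with a nondegenerate derivative. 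You are right that the only genuinely delicate case is $m$ odd at $p=2$, where $\frac{m-2}{2}\notin\z_2$; clearing the denominator and invoking the strong form of Hensel's lemma with $|h(0)|_2\le \tfrac12 < 1 = |h'(0)|_2^2$ handles it (one should also observe, as is implicit in your setup, that $x^2-x$ is always even so $P_m$ still maps $\z_2$ into $\z_2$). The net effect is that your write-up supplies a complete proof where the paper offers only a pointer, at the cost of no additional machinery beyond Hensel's lemma and the completing-the-square identity already displayed in Section 2.
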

\begin{proof}
See Proposition 3.1 in \cite{rank 5}.
\end{proof}

\vskip 0.8em

\begin{rmk} \label{aniso}
Now, we may need understanding the behavior of representation by a diagonal quadratic form which is anisotropic over a non-archimedean local ring.

\begin{itemize}
\item[(1) ] First, we consider a quaternary quadratic form which is anisotropic over a non-dyadic (i.e., for an odd prime $p$) local ring $\z_p$.
A quaternary anisotropic diagonal quadratic form over $\z_p$ is the form of
$$\left<ap^{2r_1+1}, aup^{2r_2+1}, bp^{2r_3}, bu'p^{2r_4}\right>$$
where $a,b \in \z_p^{\times}$, $r_i \in \N_0$, and $-u$ and $-u'$ are quadratic non-residues in $\z_p^{\times}$. 
One may see that for a quadratic form $\left<a_1,a_2,a_3,a_4\right>$ which is anisotropic over $\z_p$, when
$$\ord_p(a_1x_1^2+a_2x_2^2+a_3x_3^2+a_4x_4^2)\ge r$$
for some $(x_1,x_2,x_3,x_4)\in\z_p^4$ (so also $ \in \z^4$ or $ \in \N_0^4$),
$$\ord_p(a_ix_i^2)\ge r$$ for each $i=1,2,3,4$.

\item[(2) ]A primitive quaternary anisotropic diagonal quadratic form over dyadic $\z_2$ is the form of either
$$\left<a_12^{2r_1},a_22^{2r_2},a_32^{2r_3+1},a_42^{2r_4+1}\right>$$ 
where $r_1,r_2,r_3,r_4 \in \N_0$ and $a_1,a_2,a_3,a_4 \in \z_2^{\times}$ with $a_1\eq a_2 \eq \frac{a_3+a_4}{2} \pmod 4$ and $a_3\eq a_4\eq \frac{a_1+a_2}{2} \pmod 4$ or
$$\left<a_12^{2r_1},a_22^{2r_2},a_32^{2r_3},a_42^{2r_4}\right>$$ 
where $r_1,r_2,r_3,r_4 \in \N_0$ and $a_1,a_2,a_3,a_4 \in \z_2^{\times}$ with $a_1\eq a_2 \eq a_3 \eq a_4 \pmod 4$ and $a_1+a_2+a_3+a_4 \not\eq 0 \pmod 8$.
One may get that for a quadratic form $\left<a_1,a_2,a_3,a_4\right>$ which is anisotropic over $\z_2$, when
$$\ord_2(a_1x_1^2+a_2x_2^2+a_3x_3^2+a_4x_4^2)\ge r+2$$
for some $(x_1,x_2,x_3,x_4)\in\z_2^4$ (so also $ \in \z^4$ or $ \in \N_0^4$),
$$\ord_2(a_ix_i^2)\ge r$$ for each $i=1,2,3,4$.

\item[(3) ] The maximal rank $n$ which admits an anisotropic quadratic form over non-archimedean local ring is $4$.
And a diagonal anisotropic quadratic form over a non-archimedean local ring $\z_p$ of rank $n$ (in effect, $n$ should be $1,2,3$, or $4$) would be a subform of a diagonal anisotropic quadratic form of rank $4$.
So from the above (1) and (2), one may obtain that for a diagonal quadratic form
$$\left<a_1,\cdots,a_n\right>$$
which is anisotropic over a non-archimedean local ring $\z_p$, if
$$\ord_p(a_1x_1^2+\cdots+a_nx_n^2)\ge r+(1+(-1)^p),$$
then $$\ord_p(a_ix_i^2)\ge r$$ for all $1 \le i \le n$.
\end{itemize}
\end{rmk}

\vskip 0.8em

\begin{prop} \label{max order}
A diagonal quaternary quadratic form $$\left<a_1,a_2,a_3,a_4\right>$$ is $p^{\max \limits_{1\le i \le 4}\ord_p(a_i)}\z_p$-universal over $\z_p$ for every prime $p$.
\end{prop}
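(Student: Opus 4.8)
The plan is to argue one prime at a time: fix $p$, set $e_i=\ord_p(a_i)$ and $r=\max_{1\le i\le4}e_i$, and show that $\left<a_1,a_2,a_3,a_4\right>$ represents every $N\in p^r\z_p$ over $\z_p$. First I would reduce to the primitive case. Factoring out $p^{\min_i e_i}$ writes the form as $p^{\min_i e_i}$ times a primitive quaternary form whose maximal order is $r-\min_i e_i$, and scaling by $p^{\min_i e_i}$ carries $p^{r}\z_p$-representation into $p^{\,r-\min_i e_i}\z_p$-representation; hence it suffices to treat the primitive case, so I assume some $a_{i_0}\in\z_p^{\times}$ and $r=\max_i e_i$. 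The target $N\in p^{r}\z_p$ with $N=0$ is trivial, so I assume $\ord_p(N)\ge r$.

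Next comes the dichotomy between isotropic and anisotropic forms over $\z_p$. If $\left<a_1,a_2,a_3,a_4\right>$ is isotropic, I would pick a primitive isotropic vector $w\in\z_p^4$ and an index $i_0$ with $w_{i_0}\in\z_p^{\times}$, so that $\ord_p(a_{i_0}w_{i_0})=e_{i_0}\le r$. The binary sublattice spanned by $w$ and the $i_0$-th standard basis vector then has Gram matrix $\left(\begin{smallmatrix}0 & a_{i_0}w_{i_0}\\ a_{i_0}w_{i_0} & a_{i_0}\end{smallmatrix}\right)$, a scaled hyperbolic plane of scale $p^{e_{i_0}}$ whose value set is $a_{i_0}\,y\,(2w_{i_0}x+y)$. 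For $p$ odd this already exhausts $p^{e_{i_0}}\z_p\supseteq p^{r}\z_p$, which settles the isotropic case. If instead the form is anisotropic, then $\left<a_1,a_2,a_3,a_4\right>\otimes\q_p$ is the quaternion norm form over $\q_p$ and therefore represents every element of $\q_p^{\times}$; here I would invoke Remark \ref{aniso}(1), which says that no cancellation of $p$-orders can occur, so that representing $N$ is governed blockwise by the unramified norm-form pieces described there, and the hypothesis $\ord_p(N)\ge r=\max_i e_i$ guarantees that a block of the correct scale (and order-parity) at most $p^{r}$ is available to produce an integral representation. Conceptually, either way I am locating inside the lattice a rank-two piece of scale $\le p^{r}$ that already represents $p^{r}\z_p$, and if preferred this can all be packaged uniformly by solving the congruence $\sum a_ix_i^2\equiv N \pmod{p^{2r+1}}$ with a gradient of order $\le r$ and lifting by the Hensel Lemma.

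The main obstacle is the dyadic prime $p=2$. There the hyperbolic plane of scale $2^{e_{i_0}}$, whose values are $a_{i_0}\,y\,(2w_{i_0}x+y)$ with $y$ and $2w_{i_0}x+y$ of equal parity, represents the odd multiples of $2^{e_{i_0}}$ and the multiples of $2^{e_{i_0}+2}$ but misses the class $2^{e_{i_0}+1}\z_2^{\times}$, so a single rank-two piece no longer suffices. To recover the missing class I would bring in a third coordinate and match $N$ in the correct square class modulo $8$ by a counting argument, then feed the resulting solution, now valid modulo a sufficiently high power $2^{2r+3}$, into the dyadic form of the Hensel Lemma; the anisotropic subcase must likewise be run through the explicit classification in Remark \ref{aniso}(2), where the threshold is shifted by the usual factor coming from $\ord_2(2)$. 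Verifying these dyadic square-class matchings, and checking that the order threshold $r$ is exactly what makes the lifting precision suffice, is where the genuine work lies.
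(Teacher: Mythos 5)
Your route is genuinely different from the paper's. The paper works Jordan-component by Jordan-component: for odd $p$ it quotes O'Meara 92:1b (ternary unimodular forms are $\z_p$-universal, binary unimodular forms represent all units) and assembles the claim from the blocks of minimal order, while for $p=2$ it reduces to the four shapes $\left<u_1,2u_2,2u_3,2u_4\right>,\dots,\left<u_1,u_2,u_3,u_4\right>$ with $u_i\in\{1,3,5,7\}$ and omits the finite check. You instead split on isotropy. Your isotropic argument at odd $p$ is complete and arguably cleaner than the paper's: the computation $Q(xw+ye_{i_0})=a_{i_0}y(2w_{i_0}x+y)$ is correct, and since $2w_{i_0}\in\z_p^{\times}$ the value set is all of $a_{i_0}\z_p=p^{e_{i_0}}\z_p\supseteq p^{r}\z_p$ in one stroke. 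Your anisotropic odd-$p$ case is right in substance but stated loosely; what actually needs checking, using the shape in Remark \ref{aniso}(1), is that the even-order pair $\left<bp^{2r_3},bu'p^{2r_4}\right>$ represents every element of even order at least $\max(2r_3,2r_4)$ (pull out $p^{2\min(r_3,r_4)}$ and reduce to the unramified norm form $\left<1,u'\right>$, which represents all elements of even order) and dually for the odd-order pair; the hypothesis $\ord_p(N)\ge r$ is exactly what puts $N$ above the threshold of the block of matching parity. Where your proposal buys less than the paper is at $p=2$: you correctly diagnose that the hyperbolic plane misses $2^{e_{i_0}+1}\z_2^{\times}$ and that a third variable plus a dyadic Hensel lift is needed, but the "counting argument" and the precision thresholds are not pinned down, and the anisotropic dyadic subcase is only gestured at; the paper, while also omitting the verification, at least reduces it to an explicit finite computation over units modulo $8$. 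So: same theorem, different and partly sharper machinery at odd primes, but the dyadic case remains a to-do in both treatments and is less concretely specified in yours.
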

\begin{proof}
First, we assume that $p$ is an odd prime.
By 92:1b in \cite{O}, a ternary unimodular (i.e., $u_1,u_2,u_3 \in \z_p^{\times}$) quadratic form $\left<u_1,u_2,u_3\right>\otimes \z_p$ is $\z_p$-universal and a binary unimodular (i.e., $u_1,u_2 \in \z_p^{\times}$) quadratic form $\left<u_1,u_2\right>\otimes \z_p$ represents every $p$-adic unit $\z_p^{\times}$.
Which may yield the claim.

Next we assume that $p=2$.
One may show that for arbitrary $2$-adic unit $u_i \in \z_2^{\times}$,
the quadratic forms 
$$\left<u_1,2u_2,2u_3,2u_4\right>, \ \left<u_1,u_2,2u_3,2u_4\right>, \ \left<u_1,u_2,u_3,2u_4\right>, \ \left<u_1,u_2,u_3,u_4\right>$$
are $\z_2$-universal over $\z_2$.
For two $2$-adic units $u,u' \in \z_2^{\times}$, since 
$$u\eq u' \pmod{8} \iff u\in u' (\z_2^{\times})^2,$$
it would be enough to show the claim only for $u_i \in \{1,3,5,7\}$.
The tedious processing is omitted in this article.
\end{proof}


\section{Non-almost regular quaternary $m$-gonal forms}

Now we are ready to prove our main goals.
At the end, in this last section, we prove Theorem \ref{thm 1} and Theorem \ref{thm 2} by concretely constructing a non-almost regular $m$-gonal form of rank $4$. 

\vskip 0.8em

\begin{proof} [Proof of Theorem \ref{thm 1}]
Since the $m$-gonal form $\left<a_1,a_2,a_3,a_4\right>_m$ is almost regular if and only if the $m$-gonal form $\left<\alpha a_1,\alpha a_2,\alpha a_3,\alpha a_4\right>_m$ is almost regular for $\alpha \in \N$, throughout this proof, we assume that $(a_1,a_2,a_3,a_4)=1$.\\
(1) Theorem 4.9 (2) in \cite{CO} directly yields the claim.\\
(2) Since the representation over $\N_0$ implies the representation over $\z$, it would be enough to show this theorem only over $\z$.

For an odd prime $p$, if a primitive (i.e., satisfying $(a_1,a_2,a_3,a_4)=1$) quadratic form $\left<a_1,a_2,a_3,a_4\right>$ is anisotropic over $\z_p$, then we have that $a_1+a_2+a_3+a_4 \in \z_p^{\times}$.
When a primitive quadratic form $\left<a_1,a_2,a_3,a_4\right>$ is anisotropic over $\z_2$, we have that 
$a_1+a_2+a_3+a_4 \in 4\z_2^{\times}\cup 2\z_2^{\times}\cup \z_2^{\times}$.
Therefore, we may see that for $m$ satisfying \eqref{cond}, $$\ord_p(8(m-2)N+(m-4)^2(a_1+a_2+a_3+a_4)) \le \ord_p(16) \le 4$$
holds for any $p \in T$ and $N \in \N$, that is to say, the order of integer $8(m-2)N+(m-4)^2(a_1+a_2+a_3+a_4)$ is absolutely bounded at every prime spot $p$ such that $\left<a_1,a_2,a_3,a_4\right>\otimes \z_p$ is anisotropic.
And so Theorem 4.9 (2) in \cite{CO} may yield the first statement.

For the second statement, define $T':=\{p|\left<a_1,a_2,a_3,a_4\right>\otimes \z_p \text{ is not universal}\}$ ($T'$ could be empty).
We show that for any sufficiently large $m$ such that $$m \ge \left(\prod \limits_{p \in T}p^{\max \limits _{1 \le i \le 4}\ord_p(64a_i)}\right)\cdot \left(\prod \limits_{p \in T'\setminus T}p^{\ord_p(4p)}\right)+a_1+a_2+a_3+a_4+4,$$
if $m$ does not satisfy \eqref{cond}, then the $m$-gonal form $\left<a_1,a_2,a_3,a_4\right>_m$ is not almost regular.
If there is an odd prime in $T$ which does not divide $m-2$, then we write the prime as $q \in T$ and
otherwise, then we put $q=2 \in T$ (note that when $q=2$, we have $m \eq 0 \pmod 4$).
We may take a residue $r(q)$ in $\z/q^{\max \limits _{1 \le i \le 4}\ord_q(64a_i)}\z$
for which
$$8(m-2)\cdot r(q)+(m-4)^2(a_1+a_2+a_3+a_4) =0$$  in $\z/q^{\max \limits _{1 \le i \le 4}\ord_q(64a_i)}\z$.
For a prime $p \not\in T'$, the $m$-gonal form $\left<a_1,a_2,a_3,a_4\right>_m$ is universal over $\z_p$ by Proposition \ref{loc rep}.
For a prime $p \in T'$, from our assumption that $(a_1,a_2,a_3,a_4)=1$, we may take a residue $r(p)$ in $\z/p^{\ord_p(4p)}\z$ for which
$N$ is represented by $\left<a_1,a_2,a_3,a_4\right>_m$ over $\z_p$ for any $N \eq r(p) \pmod {p^{\ord_p(4p)}}$.
By combining the above arguments with the Chinese Remainder Theorem, we may take a residue $r$ in $\z/\left(\prod \limits_{p \in T\setminus \{q\}}p^{\ord_p(4p)}\right) \cdot \left(q^{\max \limits _{1 \le i \le 4}\ord_q(64a_i)}\right) \z$
satisfying that for any integer $N$ with $$N \eq r \pmod{\left(\prod \limits_{p \in T'\setminus \{q\}}p^{\ord_p(4p)}\right) \cdot \left(q^{\max \limits _{1 \le i \le 4}\ord_q(64a_i)}\right)},$$
the followings
$$\begin{cases}
\text{$N$ is locally represented by $\left<a_1,a_2,a_3,a_4\right>_m$} \\
\text{$8(m-2)N+(m-4)^2(a_1+a_2+a_3+a_4) \eq 0 \pmod{q^{\max \limits _{1 \le i \le 4}\ord_q(64a_i)}}$}
\end{cases}$$
hold.
So we may take a positive integer $N_0$ in the interval $[a_1+a_2+a_3+a_4+1, m-4]$ such that $N_0$ is locally represented by $\left<a_1,a_2,a_3,a_4\right>_m$ and
$$8(m-2)N_0+(m-4)^2(a_1+a_2+a_3+a_4) \eq 0 \pmod{q^{\max \limits _{1 \le i \le 4}\ord_q(64a_i)}}.$$
But since the smallest $m$-gonal number is $m-3$ except $0$ and $1$, the integer $N_0$ in the interval $[a_1+a_2+a_3+a_4+1, m-4]$ would not be (globally)
 represented by $\left<a_1,a_2,a_3,a_4\right>_m$ over $\z$ (and so over $\N_0$ too).
And then we may get infinitely many positive integers $N_n$ which are represented by $\left<a_1,a_2,a_3,a_4\right>_m$ locally, but not globally 
where
$8(m-2)N_n+(m-4)^2(a_1+a_2+a_3+a_4)=q^{2kn}(8(m-2)N_0+(m-4)^2(a_1+a_2+a_3+a_4))$ for $k \in \N$ satisfying
\begin{equation} \label{k cond}
\begin{cases}
q^k \eq 1 \pmod {8(m-2)}  & \text{when }q \text{ is an odd prime} \\
 q^k \eq 1 \pmod{\frac{m-2}{2}} & \text{when }q=2.
\end{cases}
\end{equation}
For a contradiction, we suppose that $N_n$ is represented by $\left<a_1,a_2,a_3,a_4\right>_m$ over $\z$, i.e.,
$$8(m-2)N_n+(m-4)^2(a_1+a_2+a_3+a_4)=\sum_{i=1}^4a_i(2(m-2)x_i-(m-4))^2$$
for some $(x_1,x_2,x_3,x_4) \in \z^4$.
Since 
$$8(m-2)N_n+(m-4)^2(a_1+a_2+a_3+a_4)\eq 0 \pmod{q^{2kn+{\max \limits_{1\le i \le 4}\{\ord _q(64a_i)\}}}}$$
and $\left<a_1,a_2,a_3,a_4\right>\otimes \z_q$ is anisotropic,
we have that
$$2(m-2)x_i-(m-4) \eq 0 \pmod {q^{kn+\ord_q(4)}}$$
by Remark \ref{aniso}.
From our choice of $k \in \N$ satisfying \eqref{k cond}, we have that
$$\frac{2(m-2)x_i-(m-4)}{q^{kn}}=2(m-2)y_i-(m-4)$$ for some $y_i \in \z$.
And then we obtain a contradiction that $N_0=a_1P_m(y_1)+a_2P_m(y_2)+a_3P_m(y_3)+a_4P_m(y_4)$.
This completes the proof.

\end{proof}

\vskip 0.8em

\begin{rmk}
(1) For $a_1,\cdots,a_n \in \N$ with $(a_1,\cdots,a_n)=1$ and a prime $q$, let $\left<a_1,\cdots,a_n\right>\otimes \z_q$ be anisotropic.

When an $m$-gonal form $\left<a_1,\cdots,a_n\right>_m$ represents a positive integer $N$, 
i.e., 
$$8(m-2)N+(m-4)^2(a_1+\cdots+a_n)=\sum_{i=1}^na_i(2(m-2)x_i-(m-4))^2$$
for some $(x_1,\cdots,x_n) \in \z^4$
for which $8(m-2)N+(m-4)^2(a_1+\cdots+a_n)$ is divided by a high power of $q$, we have that $2(m-2)x_i-(m-4)$ also would be divided by a high power of $q$ since
$$\ord_q(8(m-2)N+(m-4)^2(a_1+\cdots+a_n)) \le \ord_q(4a_i(2(m-2)x_i-(m-4))^2)$$
for each $i=1,\cdots,n$ by Remark \ref{aniso}.
Note that the above situation occurs only when $$\text{$(q,2(m-2))=1 \quad$  or $\quad m \eq 0 \pmod 4$ with $q=2$}$$ because $\ord_q(a_1+\cdots+a_n) \le \ord_q(4)$ for anisotropic $\left<a_1,\cdots,a_n \right>\otimes \z_q$.
So we may get $k \in \N$ for which 
\begin{equation} \label{k cond'}
\begin{cases}
q^k \eq 1 \pmod {8(m-2)}  & \text{when }(q,2(m-2))=1 \\
 q^k \eq 1 \pmod{\frac{m-2}{2}} & \text{when }q=2.
\end{cases}
\end{equation}
When $\ord_q(8(m-2)N+(m-4)^2(a_1+\cdots+a_n))\ge \max \limits_{1\le i \le n} \ord_q(64a_i)+2k$, we may see
that
$$8(m-2)N'+(m-4)^2(a_1+\cdots+a_n)=\sum_{i=1}^na_i(2(m-2)x_i'-(m-4))^2$$
where $N' \in \N$ and $x_i' \in \z$ satisfying
$$\begin{cases}
8(m-2)N'+(m-4)^2(a_1+\cdots+a_n)=\frac{8(m-2)N+(m-4)^2(a_1+\cdots+a_n)}{q^{2k}}\\
2(m-2)x_i'-(m-4)=\frac{2(m-2)x_i-(m-4)}{q^k}
\end{cases}$$
yielding that $N'$ is represented by $\left<a_1,\cdots,a_n\right>_m$ and further a representation of $N$ is derived from a representation of $N'$.
Following the same processing as above, we may back track the representations of the positive integers $N>N'>N''>\cdots>N'^{\cdots}$$'$ at least until  
$$\text{$\max \limits_{1\le i \le n} \ord_q(64a_i) \le \ord_q(8(m-2)N'^{\cdots}$$'+(m-4)^2(a_1+\cdots+a_n)) < \max \limits_{1\le i \le n} \ord_q(64a_i)+2k$}$$
holds.

As the contraposition of the above argument, we may see that when a positive integer $N_0 \in \N$ with $\ord_q(8(m-2)N_0+(m-4)^2(a_1+\cdots+a_n))\ge\max \limits_{1\le i \le n} \ord_q(64a_i)$ is not represented by $\left<a_1,\cdots,a_n\right>_m$ (for which the quadratic form $\left<a_1,\cdots,a_n\right>\otimes \z_q$ is anisotropic), the infinite series of positive integers $N_n$ are not represented by the $\left<a_1,\cdots,a_n\right>_m$ where $8(m-2)N_n+(m-4)^2(a_1+\cdots+a_n)=q^{2kn}(8(m-2)N_0+(m-4)^2(a_1+\cdots+a_n))$ for $k \in \N$ satisfying \eqref{k cond'}.

On the other hand, when $m$ is large, a quaternary $m$-gonal form $\left<a_1,a_2,a_3,a_4\right>_m$ would fail to represent a bunch of small positive integers since the smallest $m$-gonal number is $m-3$ except $0$ and $1$.
As a specific observation, we suggest an example that a quaternary $m$-gonal form $\left<a_1,a_2,a_3,a_4\right>_m$ would represent at most $2^4-1$ positive integers before $m-3$.
Therefore there is  a lot of chance to take a positive integer $N_0$ (especially, among the small positive integers) which is  represented by $\left<a_1,a_2,a_3,a_4\right>_m$ locally, but not globally and satisfy $\ord_q(8(m-2)N_0+(m-4)^2(a_1+a_2+a_3+a_4))\ge\max \limits_{1\le i \le 4} \ord_q(64a_i)$.
\\

(2) Very recently, it was shown that any $m$-gonal form of rank $n \ge 5$ is almost regular over $\N_0$ too (not only over $\z$) in \cite{N_0}.
The almost regularity of $m$-gonal form $\left<a_1,\cdots,a_n\right>_m$ of rank $n \ge 5$ is strongly bound to the property that quadratic form $\left<a_1,\cdots,a_n\right>$ of rank $n \ge 5$ is always isotropic over every non-archimedean local ring $\z_p$.
As the same stream, by Theorem \ref{thm 1} (1), when the positive definite quaternary quadratic form $\left<a_1,a_2,a_3,a_4\right>$ is isotropic over every non-archimedean local ring $\z_p$, the $m$-gonal form $\left<a_1,a_2,a_3,a_4\right>_m$ is almost regular for every $m \ge 3$ over $\z$.
Unfortunately, it is unknown whether such a principle in the case of quaternary form also holds over $\N_0$ yet.
The author suggests the interesting open problems that when a positive definite quaternary quadratic form $\left<a_1,a_2,a_3,a_4\right>$ is isotropic, 
to determine that whether the almost regularity of the m-gonal forms $\left<a_1,a_2,a_3,a_4\right>_m$ are still preserved  or not for $m \ge 3$ over $\N_0$ too and to determine that whether the first statement in Theorem \ref{thm 1} (2) holds over $\N_0$ too or not.
\end{rmk}

\vskip 0.8em

\begin{proof} [Proof of Theorem \ref{thm 2}]
Let $p$ be an odd prime with $(p,m-2)=1$.
We may take an odd prime $q_1$ which is a quadratic residue modulo $p$ and an odd prime $q_2(\not=q_1)$ for which $-q_2$ is a quadratic non-residue modulo $p$. 
Note that it would be possible to take infinitely many such odd primes $p, q_1$ and $q_2$.
Now we show that the $m$-gonal form $$\left<q_1,q_2,q_1p,q_2p\right>_m$$
is not almost regular over $\z$ (so also over $\N_0$).
By Proposition \ref{loc rep}, we may see that $\left<q_1,q_2,q_1p,q_2p\right>_m$ is locally universal since the quaternary quadratic form $\left<q_1,q_2,q_1p,q_2p\right>$ is locally universal.
But the $m$-gonal form $\left<q_1,q_2,q_1p,q_2p\right>_m$ does not (globally) represent the infinitely many positive integers
$$N_n=\frac{p^{2kn}\{8(m-2)+(m-4)^2(q_1+q_2+q_1p+q_2p)\}-(m-4)^2(q_1+q_2+q_1p+q_2p)}{8(m-2)}$$
where $k\in \N$ with $p^{k}\eq 1 \pmod{8(m-2)}$.
When $n=0$, we may easily see that $N_0=1$ is not represented by $\left<q_1,q_2,q_1p,q_2p\right>_m$ since $q_1,q_2,q_1p,q_2p>1$.
When $n>0$, for a contradiction, we suppose that $N_n$ is represented by $\left<q_1,q_2,q_1p,q_2p\right>_m$.
On the other words, we suppose that $q_1(2(m-2)x_1-(m-4))^2+q_2(2(m-2)x_2-(m-4))^2+q_1p(2(m-2)x_3-(m-4))^2+q_2p(2(m-2)x_4-(m-4))^2=8(m-2)N_n+(m-4)^2(q_1+q_2+q_1p+q_2p)$ for some $(x_1,x_2,x_3,x_4) \in \z^4$.
Since the quadratic form $\left<q_1,q_2,q_1p,q_2p\right>$ is anisotropic over $\z_p$ and $p^{2kn}|8(m-2)N_n+(m-4)^2(q_1+q_2+q_1p+q_2p)$, we have that 
$$p^{kn}|2(m-2)x_i-(m-4)$$ for $i=1,2,3,4$.
Moreover, since $p^{kn} \eq 1 \pmod{2(m-2)}$, we have that 
$$2(m-2)y_i-(m-4)=\frac{2(m-2)x_i-(m-4)}{p^{kn}}$$
for some $y_i \in \z$.
So we may obtain that 
$q_1(2(m-2)y_1-(m-4))^2+q_2(2(m-2)y_2-(m-4))^2+q_1p(2(m-2)y_3-(m-4))^2+q_2p(2(m-2)y_4-(m-4))^2=8(m-2)\cdot 1+(m-4)^2(q_1+q_2+q_1p+q_2p)$, yielding that $1$ is represented by $m$-gonal form 
$\left<q_1,q_2,q_1p,q_2p\right>_m$.
Which is a contradiction.
\end{proof}

\end{document}